\newcommand{\nwc}{\newcommand}
\nwc{\draftdate}{\today}
\newtheorem{remark}[theorem]{Remark}
\newcommand{\disp}{\displaystyle}
\newcommand{\barint}{\hbox{$\int$\kern-0.75\intwidth
		\vrule width 0.5\intwidth height 2.4pt depth -2pt\kern0.25\intwidth}}
\newlength\intwidth
\newcommand\avint{\hbox{\hbox{$\displaystyle \int$}\hbox{\kern-.9em{$-$}}}}
\newcommand\smavint{\hbox{\hbox{$\int$}\hbox{\kern-.75em{$-$}}}}
\nwc{\st}{^{\mbox{\it st}}}
\nwc{\qref}[1]{(\ref{#1})}
\nwc{\veloc}{v}
\nwc{\rhoc}{\beta}
\nwc{\hl}{\hat{L}}
\newcommand{\na}{\nabla}
\def\Xint#1{\mathchoice
	{\XXint\displaystyle\textstyle{#1}}%
	{\XXint\textstyle\scriptstyle{#1}}%
	{\XXint\scriptstyle\scriptscriptstyle{#1}}%
	{\XXint\scriptscriptstyle\scriptscriptstyle{#1}}%
	\!\int}
\def\XXint#1#2#3{{\setbox0=\hbox{$#1{#2#3}{\int}$}
		\vcenter{\hbox{$#2#3$}}\kern-.51\wd0}}
\def\dashint{\Xint-}
\nwc{\intRp}{\int_0^\infty}
\nwc{\aint}{\dashint}
\nwc{\aaint}{\dashint}
\newcommand{\De}{\Delta}
\newcommand{\vep}{\varepsilon}
\newcommand{\Om}{\Omega}
\nwc{\kn}{{\kappa_n}}
\nwc{\kg}{{\kappa_g}}
\nwc{\tg}{{\tau_g}}
\nwc{\vkappa}{{\vec{\kappa}}}
\newcommand{\bn}{{\bf n}}
\newcommand{\R}{\mathbb R}
\newcommand{\be}{\begin{eqnarray}}
	\newcommand{\ee}{\end{eqnarray}}
\newcommand{\nn}{\nonumber}
\newcommand{\ben}{\begin{eqnarray*}}
	\newcommand{\een}{\end{eqnarray*}}
\newcommand{\stophere}{\newpage
	\bibliographystyle{plain}
	\bibliography{dai-bib,dai-bib-nanoparticles}
\end{document}}
\title{Gamma Convergence for the \\{de Gennes}-Cahn-Hilliard energy
\thanks {{\bf Funding:} The work of the first author was partially supported by the U.S. National Science Foundation through grant
DMS-1815746. The work of the third author was partially supported by the U.S. National Science Foundation through grant
DMS-2012634.}}
\author{
Shibin Dai
\thanks{Department of Mathematics, The University of Alabama, Tuscaloosa, AL 35487-0350, USA. 
	Email: sdai4@ua.edu}
\and 
Joseph Renzi
\thanks{Department of Mathematics, The University of Alabama, Tuscaloosa, AL 35487-0350, USA. 
	Email: jdrenzi@crimson.ua.edu} 
\and 
Steven M. Wise
\thanks{Department of Mathematics, 
	The University of Tennessee, Knoxville, 
	227 Ayres Hall. 1403 Circle Drive. Knoxville TN 37996-1320, USA. 
	Email: swise1@utk.edu} 
}
\date{}
\begin{document}

\maketitle

\begin{abstract}
	The degenerate de Gennes-Cahn-Hilliard (dGCH) equation is a model for phase separation which may more closely
	approximate surface diffusion than others  in the limit when the thickness of the transition layer approaches zero. As 
	a first step to understand the limiting behavior, in this paper we study the $\Gamma$--limit of the dGCH energy. We 
	find that its $\Gamma$--limit is a constant multiple of the interface area, where the constant is determined by the 
	de Gennes coefficient together with the double well potential. In contrast, the transition layer profile is solely determined 
	by the double well potential. 
	
\end{abstract}

\section{Introduction} 

Surface diffusion is a fourth-order, highly nonlinear geometric evolution of a surface in which the normal velocity is proportional to 
the surface Laplacian of the mean curvature. It plays an important role in materials science, a common example being solid 	
state dewetting, see, for 
example, \cite{thompson12, thompson-dewetting}. 
Direct numerical approaches to surface diffusion have been developed. See, e.g.,  
\cite{bjwz-2017, barrett2019, bmn2005,hausser-voigt-2005, hausser-voigt-2007}. But, they inevitably face the difficulties 
of handling topological changes when merging or splitting occurs. In comparison, 
diffuse interface approaches capture the motion of interfaces implicitly and 
handle topological changes without any difficulty. Based on the intuition that the degenerate Cahn-Hilliard equation 
converges to  surface diffusion when the transition layer thickness approaches 
zero~\cite{cahn-taylor:surface, taylor-cahn:jsp1994}, diffuse-interface approximations for surface diffusion are a reasonable 
alternative~\cite{banas09,BSB+16,JBTS12, LLRV09, NBS+17, RRV06, SBB+15, SKSV17, Low-09, WKL07, WLK+05, YCG06}.

The conventional Cahn-Hilliard (CH) functional 
\vspace{-.1in}
\begin{align}
	E^\vep_{\rm CH}(u) = \int_\Om \left\{\frac{\vep}{2}|\nabla u|^2 + \frac1\vep W(u)\right\}dx \label{def-E}
\end{align}
is  a widely used phenomenological diffuse-interface model to describe the free energy of a system 
that goes through phase separation \cite{CH-58}, \cite{CH-reprise}. Here $\Omega\subset\R^n$ is a bounded domain 
in $\R^n$, with $n\geq 2$, $u:\Omega\to \R$ is the relative concentration 
of the two phases, $W(u)$ is a double-well potential with two equal minima at $u^-< u^+ $ corresponding to 
the two pure phases, 
and $\vep>0$ is a parameter that is proportional to the thickness of the transition region between the two phases. 
For convenience and for systems where $u$ is only used as an order parameter to indicate the phases,  
$W$ is usually taken to be smooth, such as
\begin{align} \label{def-W}
	W(u) = \gamma (u-u^+)^2(u-u^-)^2.
\end{align}
Here $\gamma$ is a constant which usually takes a value such that $\int_{u^-}^{u^+} \sqrt{2W(s)}\,ds =1$. For simplicity 
we take $\gamma=1$.
The Cahn-Hilliard equation 
\begin{align}
	\partial_t u &= \nabla \cdot \left( M(u) \nabla \mu\right)  
	\quad\mbox{ for }x\in\Om\subset\R^n,\;t\in [0,\infty), \label{CH1}  \\
	\mu  &:= -\vep^2\De u +  W'(u), \label{CH2}
\end{align}
coupled with either Neumann boundary conditions, 
$\partial_\bn u= \partial_\bn \mu =0$, or 
periodic boundary conditions, where $\Om$ is the periodic cell, formally dissipates the 
free energy \qref{def-E}. 
The diffusion mobility $M(u)$ is 
nonnegative and generally depends on $u$. 
The modeling of the dependence of the diffusion mobility $M(u)$ on the phase concentration $u$ presents 
additional challenges,
especially when $M(u)$ is degenerate in the two pure phases $u^\pm$. For instance, $M(u)$ may take the form 
\cite{cahn-taylor:surface, puri97, taylor-cahn:jsp1994}
\begin{align} \label{def-M}
	M(u)=|(u-u^+)(u-u^-)|^m \quad\mbox{ for all } u\in \R
\end{align}	
for some $m>0$. Due to the degeneracy, 
it has been conjectured that  there is no diffusion in the two pure phases,
and the dynamics is governed by diffusion only in the transition region \cite{cahn-taylor:surface}, \cite{taylor-cahn:jsp1994}.
In addition,  if the initial value of $u$ lies inside $[u^-, u^+]$, then the solution $u$ is believed to remain  
in $[u^-, u^+]$, for all time.   
However, recent studies by one of the authors (Shibin Dai) and his collaborator Qiang Du 
\cite{dd:onesidedCH,dd:degenerateCH,dd:numericCH,dd:weakCH}, 
and by Lee, M\"{u}nch, and S\"{u}li
\cite{lms:insufficient,lms:degenerateCH}, show that in the 
sharp interface limit of the degenerate Cahn-Hilliard equation,  there is an undesired diffusion process in the bulk phase, in addition
to the desired surface diffusion along the interface. Dai and Du  studied the coarsening mechanisms by asymptotic
analysis \cite{dd:onesidedCH, dd:degenerateCH}, the coarsening rates by careful and detailed 
numerical simulations \cite{dd:numericCH}, and the wellposedness by theoretical analysis 
\cite{dd:weakCH}. Their studies show that for 
a smooth potential such as \qref{def-W} and a degenerate mobility \qref{def-M}, the sharp interface limit of the degenerate CH
equations is not surface diffusion. Indeed, there is a porous medium diffusion process in both phases, in 
addition to surface diffusion. More precisely, for the choice $M(u)=|(u-u^+)(u-u^-)|$, the motion of the interfaces occurs in the 
$t_2=O(\vep^{-2})$ time scale and it can be written as 
\begin{align}
	\nabla\cdot & (|\mu_1|\nabla\mu_1) =0 \quad\mbox{in }\Omega_\pm, \label{degCH-eq1}\\
	\mu_1 &= -H \quad\mbox{on }\Gamma, \label{degCH-eq2} \\  
	V_\bn &   =  \Delta_s H - \biggl[|\mu_1|\partial_\bn\mu_1\biggr]^+_-
	\quad\mbox{on }\Gamma. \label{degCH-eq3}
\end{align}
Here $\Omega_+$ and $\Omega_-$ are the regions occupied by the positive and negative phases, 
respectively, $\Gamma$ is the interface separating the two phases, $H$ is the mean curvature, and 
$\bn$ is the normal vector, pointing toward $\Omega_+$. If $\Omega_+$ consists of convex components, 
then $H<0$. Due to \qref{degCH-eq1} and \qref{degCH-eq2}, we have
$\mu_1>0$ in $\Omega_\pm$. Since $W''(u^\pm)>0$ and  $u=u^\pm + \vep \mu_1/W''(u^\pm)  + O(\vep^2)$, 
we see that $u>u^+$ in $\Omega_+$ and  $u>u^-$ in $\Omega_-$.  
Consequently, the solution $u$ is not confined in $[u^-, u^+]$
and the normal velocity of the interface is determined by surface diffusion together with a quasi-stationary 
porous medium diffusion process in both phases. If the degeneracy in the mobility is higher, then the porous 
medium diffusion process can be weakened into a process in a slower time scale, but it will not be annihilated. 
See also \cite{lms:insufficient, lms:degenerateCH,lms:response,voigt:comment} for related results and 
discussions. 

Recently one of the authors (Wise) and his collaborators Salvalaglio and Voigt introduced a new variational diffuse
interface model \cite{Salvalaglio2020a}, 
in which they imposed a singularity in the formulation of the 
energy functional. 
Rather than formulating the free energy as in \qref{def-E}, they define the free energy 
by including a de Gennes coefficient \cite{DWZZ19, LQZ17}. In this paper we call it the de Gennes-Cahn-Hilliard (dGCH) energy
\begin{align}
	E^\vep_{\rm dGCH}(u) := \int_\Omega \frac1{g_0(u)}\left( \frac\vep2|\nabla u|^2 + \frac1\vep W(u)\right)dx, \quad\mbox{ for all }u\in H^1(\Omega).
\end{align}
Here the double well potential $W(u)$ is defined by \qref{def-W}, and $g_0$ is a function of the form 
\begin{align}\label{def_g0p}
	g_0(u) =  |(u-u^-)(u-u^+)|^p, \quad p>0.
\end{align}
Of particular interest is the case when $p=1$,
\begin{align}\label{def_g01}
	g_0(u) =  |(u-u^-)(u-u^+)|.
\end{align}
The factor $\frac1{g_0}$ is called the   de Gennes coefficient \cite{DWZZ19, LQZ17}, or the energy restriction function. 
The corresponding energy dissipation flow is the degenerate de Gennes-Cahn-Hilliard system (which was called 
the doubly degenerate Cahn-Hilliard system in \cite{Salvalaglio2020a}).
\begin{align}
	\partial_t u &= \frac1\vep\nabla\cdot(M_0(u)\nabla \mu), \label{dGCH-eq1}\\
	\mu &=  - \vep \nabla\cdot\left(\frac{1}{g_0(u)} \nabla u\right) + \frac1{\vep g_0(u)} W'(u)
	-\frac{g_0'(u)}{g_0^2(u)}\left(\frac\vep2 |\nabla u|^2 + \frac1\vep W(u)\right), \label{dGCH-eq2} 
\end{align}
where $\mu= \delta_u E^\vep_{\rm dGCH}$ is the chemical potential, defined by 
the variational derivative of  $E^\vep_{\rm dGCH}$ with respect to $u$. The mobility
$M_0(u)$ is degenerate at $u^\pm$ and is defined as
\begin{align}\label{def-M0}
	M_0(u) =  (u-u^-)^2(u-u^+)^2.
\end{align}  
The factor $g_0'/g_0^2$ is also singular. 
For $u^-<u<u^+$ we have
\begin{align}
	\frac{g_0'(u)}{g_0^2(u)} =- \frac{p(2u-u^+-u^-)}{ (u-u^-)^{p+1} (u^+-u)^{p+1}}.
\end{align} 
Intuitively, the singularities at $u^\pm$ help to keep solutions confined in $[u^-, u^+]$. But the validity of this argument
remains open.  
The degeneracy and singularities in $M_0$, $1/{g_0}$, and ${g_0'}/{g_0^2}$ 
make numerical simulations very challenging, even for $p=1$. One way to ease such challenges is to consider a regularization corresponding to
\begin{align}
	M_\alpha(u) &=  (u-u^-)^2 (u-u^+)^2 + \alpha\vep, \label{def-M_alpha} \\
	g_\alpha(u) &= {\sqrt{(u-u^-)^{2}(u-u^+)^{2} + \alpha^2\vep^2}},  \quad\alpha>0. 
	\label{def-g_alpha}
\end{align}
The regularized system is non-degenerate and formally recovers the dGCH system \qref{dGCH-eq1} and \qref{dGCH-eq2} 
when $\alpha\to 0$. However, the most significant 
difference is that the regularization removes the mechanism which hopefully confines the values of 
$u$ to be in $[u^-,u^+]$. 

Numerical simulations and formal asymptotic analysis in 
\cite{Salvalaglio2020a} show that the 
degenerate dGCH equation \qref{dGCH-eq1}--\qref{dGCH-eq2} 
may more closely approximate surface diffusion than others. This exciting discovery motivated us to 
systematically explore properties of the dGCH model. In this paper we study the sharp interface limit of the dGCH energy as $\vep\to 0$
in the framework of $\Gamma$-convergence. To be more precise, we will show that the $\Gamma$-limit of the 
dGCH energy $E_{\rm dGCH}^\vep$ is a multiple of the 
interface separating the two phases $\{u\approx u^+\}$ and $\{u\approx u^-\}$ 
when $g_0(u)$ is as stated in \qref{def_g0p}, \qref{def_g01}, or when it is regularized as $g_\alpha(u)$ 
in \qref{def-g_alpha}. 
When considering  \qref{def-g_alpha} we will be using $p=1$ because it is computationally the most interesting form. 
This is the first step for us to understand the structure of the dGCH equation, and help pave the path to eventually settle the debate 
about the relation between phase-field models and the surface diffusion. To start with, we extend the definition of 
$E_{\rm dGCH}^\vep(u)$ to all $u\in L^1(\Omega)$ by the following generalization. 
\begin{align}\label{def-EdGCH}
	E^\vep_{\rm dGCH}(u) := \left\{
	\begin{array}{ll}
	\int_\Omega \frac1{g(u)}\left( \frac\vep2|\nabla u|^2 + \frac1\vep W(u)\right)dx, & \mbox{if }u\in H^1(\Omega), \\
	+\infty, &\mbox{otherwise}.
	\end{array}
	\right.
\end{align}
Here the double well potential $W(u)$ is taken as \qref{def-W}, and  
$g(u)$ is taken as  either $g_0(u)$ in \qref{def_g0p} or \qref{def_g01}, or the regularized form $g_\alpha(u)$ in \qref{def-g_alpha}.

\begin{theorem} \label{th-main} The $\Gamma$-limit of $E^\vep_{\rm dGCH}$ is a 
multiple of the perimeter of the set $A$ on which 
$u$ takes the value $u^+$. To be precise, 
the  $\Gamma$-limit of $E^\vep_{\rm dGCH}$
under the strong $L^1(\Omega)$ topology 
is
\vspace{-.1in}
\begin{align}
	E^0_{\rm dGCH}(u) := \left\{ 
	\begin{array}{ll}
		\sigma(p) Per(A) & \mbox{if } u = u^- + (u^+-u^-)\chi_A \in BV(\Omega), \\
		\infty &\mbox{otherwise.}
	\end{array}
	\right.
\end{align}
Here $\chi_A$ is the characteristic function of a set $A$ of finite perimeter, $Per(A)$ is the perimeter of $A$, and 
\[\sigma(p)=\sqrt2\int_{u^-}^{u^+}|(s-u^-)(s-u^+)|^{1-p}\,ds.\] 
In particular, the regularization of $g$ in \qref{def-g_alpha} does not change the $\Gamma$-limit. 

\end{theorem}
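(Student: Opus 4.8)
The plan is to use the classical two-part strategy for Modica--Mortola type $\Gamma$-convergence---a $\liminf$ (lower-bound) inequality and a recovery-sequence ($\limsup$) inequality---with the key new observation that the de Gennes factor $1/g$ can be absorbed into a modified potential. To this end I introduce the primitive
\[
\Phi(t):=\int_{u^-}^{t}\frac{\sqrt{2W(s)}}{g(s)}\,ds ,
\]
for which $\Phi(u^-)=0$ and $\Phi(u^+)=\sigma(p)$. Because $W(s)=[(s-u^-)(s-u^+)]^2$ and $g_0(s)=|(s-u^-)(s-u^+)|^p$, one has
\[
\frac{\sqrt{2W(s)}}{g_0(s)}=\sqrt2\,|(s-u^-)(s-u^+)|^{1-p},
\]
which is precisely the integrand defining $\sigma(p)$; this is integrable on $(u^-,u^+)$ when $p<2$, and there $\Phi$ is continuous and strictly increasing. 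All of the surface tension thus comes from $\Phi$, while the transition profile itself will be governed by $W$ alone.

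For the lower bound, Young's inequality applied to the two terms of the density gives the pointwise estimate
\[
\frac1{g(u)}\Big(\frac\vep2|\na u|^2+\frac1\vep W(u)\Big)\ \ge\ \frac{\sqrt{2W(u)}}{g(u)}\,|\na u|\ =\ \big|\na(\Phi\circ u)\big| ,
\]
the last equality by the chain rule, whence $E^\vep_{\rm dGCH}(u)\ge |D(\Phi\circ u)|(\Om)$, the total variation of $\Phi\circ u$. Now take any $u_\vep\to u$ in $L^1(\Om)$ with $\liminf_\vep E^\vep_{\rm dGCH}(u_\vep)<\infty$. Since both terms of the density are nonnegative, $\int_\Om |(u_\vep-u^-)(u_\vep-u^+)|^{2-p}\,dx=\int_\Om \frac{W(u_\vep)}{g(u_\vep)}\,dx\le C\vep\to0$, so along a subsequence $u$ takes only the values $u^\pm$ a.e., i.e. $u=u^-+(u^+-u^-)\chi_A$. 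As $\Phi$ is continuous, $\Phi\circ u_\vep\to\Phi\circ u=\sigma(p)\,\chi_A$ in $L^1(\Om)$, and lower semicontinuity of the total variation yields
\[
\liminf_{\vep\to0}E^\vep_{\rm dGCH}(u_\vep)\ \ge\ |D(\sigma(p)\chi_A)|(\Om)=\sigma(p)\,Per(A).
\]

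For the upper bound I build the standard recovery sequence. By approximation it suffices to take $A$ with smooth boundary $\Gamma$; let $d$ be the signed distance to $\Gamma$ and set $u_\vep(x)=q(d(x)/\vep)$, where $q$ is the heteroclinic profile determined solely by $W$ through $q'=\sqrt{2W(q)}$, $q(\mp\infty)=u^\mp$, suitably truncated. Along the normal direction the profile satisfies the equipartition $\frac\vep2|\na u_\vep|^2=\frac1\vep W(u_\vep)$, so the density collapses to $\frac{2W(q)}{\vep\,g(q)}$; using $|\na d|=1$, the coarea formula, the rescaling $t=d/\vep$, and finally the change of variables from $t$ to $q$ (so that $dt=dq/\sqrt{2W(q)}$) turns the leading contribution into
\[
Per(A)\int_{u^-}^{u^+}\frac{\sqrt{2W(s)}}{g(s)}\,ds=\sigma(p)\,Per(A),
\]
while the mean curvature of $\Gamma$ contributes only $O(\vep)$. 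Letting the truncation relax as $\vep\to0$ gives $\limsup_\vep E^\vep_{\rm dGCH}(u_\vep)\le\sigma(p)\,Per(A)$, matching the lower bound.

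For the regularized coefficient $g_\alpha$ in \qref{def-g_alpha} the same scheme applies with the $\vep$-dependent primitive $\Phi_\alpha^\vep(t)=\int_{u^-}^t \frac{\sqrt{2W}}{g_\alpha}\,ds$; since $g_\alpha\downarrow g_0$ as $\vep\to0$, one checks that $\Phi_\alpha^\vep\to\Phi_0$ uniformly on $[u^-,u^+]$, so the lower-semicontinuity argument reproduces the same $\Gamma$-liminf, and the profile construction is unchanged because the equipartition and the profile equation do not see $g$. I expect the genuine difficulty to be concentrated entirely at $u=u^\pm$, where $1/g$ is singular and, for the unregularized $g_0$, the bulk density is of indeterminate $0/0$ form. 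One must (i) confirm that $\sigma(p)$ is finite and that $\Phi$ stays continuous at $u^\pm$ even though $\Phi'$ blows up when $p>1$, so that $\Phi\circ u_\vep\to\Phi\circ u$ in $L^1$ survives---a truncation to $[u^-,u^+]$ is not innocuous here, since $g$ vanishes at the endpoints; and (ii) in the recovery sequence control the profile's approach to the pure phases---where $\frac{\sqrt{2W}}{g}$ is singular but integrable---so that the truncated tails contribute no spurious energy and the density, interpreted as $0$ on the pure-phase bulk, integrates exactly to $\sigma(p)$. Securing these endpoint estimates, uniformly in $\vep$ in the regularized case, is where the real work of the proof lies.
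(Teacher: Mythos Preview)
Your plan is the paper's plan: AM--GM on the density, the Modica--Mortola primitive, BV lower semicontinuity for the liminf, and a signed-distance profile with the coarea formula for the limsup. The two points you correctly flag as ``where the real work lies'' are resolved in the paper by concrete devices you should adopt. For the liminf, rather than using $\Phi$ directly (whose derivative blows up at $u^\pm$ when $p>1$, and for which $\Phi\circ u_\vep\to\Phi\circ u$ in $L^1$ does \emph{not} follow from mere continuity of $\Phi$), the paper truncates the \emph{integrand}: set
\[
h_K(t)=\sqrt2\int_{u^-}^{t}\min\bigl\{\,|(s-u^-)(s-u^+)|^{1-p},\,K\,\bigr\}\,ds,
\]
which is globally Lipschitz with constant $\sqrt2\,K$, so $h_K\circ u_k\to h_K\circ u$ in $L^1$ is immediate from $u_k\to u$ in $L^1$; apply BV lower semicontinuity to $h_K\circ u_k$, and only then let $K\to\infty$ by monotone convergence to recover $\sigma(p)$. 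For the recovery sequence, instead of the heteroclinic $q'=\sqrt{2W(q)}$ with an ad hoc tail truncation, the paper uses the $\vep_k$-regularized profile $\phi_k:=\Phi_k^{-1}$ with
\[
\Phi_k(t)=\int_{u^-}^{t}\frac{\vep_k}{\sqrt{2W(s)+\vep_k}}\,ds;
\]
since $\Phi_k(u^+)\le\sqrt{\vep_k}\,(u^+-u^-)<\infty$, the profile $\phi_k$ reaches $u^\pm$ at finite signed distance, so $u_k=\phi_k(d_A)$ is \emph{exactly} pure phase outside a strip of width $O(\sqrt{\vep_k})$ and the bulk density is identically zero---no $0/0$ interpretation or tail estimate is needed, and the coarea computation runs only over $(u^-,u^+)$. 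Your treatment of the regularized $g_\alpha$ via the $\vep$-dependent primitive coincides with the paper's.
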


\begin{remark}
Due to the singularity, $E^\vep_{\rm dGCH}(u)$ is not necessarily finite for all $u\in H^1(\Omega)$. It is an interesting question to explore 
the domain of $E^\vep_{\rm dGCH}(u)$, but that is not the purpose of this paper.
\end{remark}

\begin{remark}
	When studying mass-preserving phase separations, it is natural to include a mass constraint, 
	$\int_\Omega u\,dx = \mbox{\rm const}$. The corresponding $Gamma$ convergence results can be 
	derived almost the same way as what we do here, except that we need to do some $O(\vep)$ adjustment when 
	constructing the recovery sequence. We omit the details. 
\end{remark}

To find the $\Gamma$-limit of a sequence of functionals, we need three results
(see, e.g., \cite{braides:gamma, leoni:note}): 
\begin{enumerate}
\item a compactness result that help us characterizes the limit functional; 
\item a liminf inequality which tells us what the $\Gamma$-limit should be; and 
\item a recovery sequence that satisfies a limsup inequality, which tells us under what scenario the $\Gamma$-limit 
can be attained.  
\end{enumerate}
In Section 2 we will state and prove the necessary compactness result. In Section 3 we will state and prove the liminf and limsup inequalities. Finally in 
Section 4 we will have a brief discussion about further topics to explore. 

\section{Compactness}\label{sec-compactness}
Throughout the rest of this paper, $g(u)$ takes the form of \qref{def_g0p}, \qref{def_g01}, or \qref{def-g_alpha}. We let 
$\vep_k$ be a positive sequence such that $\vep_k \rightarrow 0$. Without loss of generality we assume $\vep_k<1$ for all 
$k$.

\begin{lemma}\label{Compactness_Lemma}
{(Compactness)} If $u_k$ is a sequence of functions such that \\
$\sup_k E^{\vep_k}_{\rm dGCH}(u_k) < \infty$, then there exists a subsequence, 
$u_{k_j}$, and a measurable set $A$ of finite perimeter in $\Omega$ such that $u_{k_j}$ converges  to 
the following piecewise function
\[ u^- + (u^+-u^-)\chi_A= \left\{ 
	\begin{array}{ll}
		u^+ & \mbox{ if }x\in A, \\
		u^- &\mbox{ otherwise}
	\end{array}
\right.\]
strongly in $L^1(\Omega)$ and a.e. in $\Omega$.  

\end{lemma}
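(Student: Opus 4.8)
The plan is to adapt the classical Modica--Mortola compactness argument, the new feature being that the de Gennes factor $1/g$ is singular at the wells $u^\pm$. First I would introduce the auxiliary function
\[
\Phi(t) = \int_{u^-}^{t} \frac{\sqrt{2W(s)}}{g(s)}\,ds = \sqrt2\int_{u^-}^{t}|(s-u^-)(s-u^+)|^{1-p}\,ds ,
\]
so that $\Phi(u^+)-\Phi(u^-)=\sigma(p)$. The exponent $1-p$ is integrable near $u^\pm$ precisely when $p<2$, i.e.\ exactly when $\sigma(p)<\infty$; this is what makes $\Phi$ a continuous, strictly increasing function on $\R$, hence a homeomorphism onto its image with continuous inverse. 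Pointwise Young's inequality $\tfrac{\vep}{2}|\na u|^2+\tfrac1\vep W(u)\ge \sqrt{2W(u)}\,|\na u|$, divided by $g(u)$, gives $\tfrac1{g(u)}\big(\tfrac{\vep}{2}|\na u|^2+\tfrac1\vep W(u)\big)\ge |\na(\Phi\circ u)|$, whence $\int_\Om |\na(\Phi\circ u_k)|\,dx\le \sup_k E^{\vep_k}_{\rm dGCH}(u_k)<\infty$.

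Next I would promote this gradient bound to a $BV$ bound. Since $\sup_k E^{\vep_k}_{\rm dGCH}(u_k)<\infty$ forces $\int_\Om \tfrac{W(u_k)}{g(u_k)}\,dx = \int_\Om |(u_k-u^-)(u_k-u^+)|^{2-p}\,dx \le C\vep_k\to 0$, and since $\Phi$ grows more slowly than $W/g$ at infinity (so that $|\Phi(t)|\le C(1+W(t)/g(t))$ for all $t\in\R$), the sequence $\Phi\circ u_k$ is bounded in $L^1(\Om)$; together with the gradient bound this gives a uniform $BV(\Om)$ bound. The compact embedding $BV(\Om)\hookrightarrow\hookrightarrow L^1(\Om)$ then yields a subsequence with $\Phi\circ u_{k_j}\to v$ strongly in $L^1$ and a.e., where $v\in BV(\Om)$.

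I would then identify $v$ and recover $u_{k_j}$. Writing $\tfrac{W}{g}=(\tfrac{W}{g})\circ\Phi^{-1}\circ\Phi$ and passing to a further a.e.-convergent subsequence, the limit $\int_\Om W/g\to 0$ forces $(\tfrac{W}{g})\circ\Phi^{-1}(v)=0$ a.e.; since $W/g$ vanishes only at $u^\pm$, the value $v$ lies in $\{\Phi(u^-),\Phi(u^+)\}$ a.e., whence $v=\Phi(u^-)+\sigma(p)\,\chi_A$ for the measurable set $A=\{v=\Phi(u^+)\}$, which has finite perimeter because $Per(A)=\sigma(p)^{-1}|Dv|(\Om)<\infty$. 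Applying the continuous inverse $\Phi^{-1}$ to the a.e.\ convergence of $\Phi\circ u_{k_j}$ gives $u_{k_j}\to \Phi^{-1}(v)=u^-+(u^+-u^-)\chi_A$ a.e.\ in $\Om$, as required.

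The remaining point, which I expect to be the main obstacle, is upgrading this a.e.\ convergence to strong $L^1(\Om)$ convergence of $u_{k_j}$ itself. For $p=1$ the map $\Phi$ is affine, $\Phi^{-1}$ is Lipschitz, and $L^1$ convergence of $u_{k_j}$ follows at once from that of $\Phi\circ u_{k_j}$; for general $p$ one must instead rule out mass of $u_{k_j}$ escaping to infinity on small sets and invoke the Vitali convergence theorem, establishing uniform integrability of $\{u_{k_j}\}$ from the growth of $W/g=|(\cdot-u^-)(\cdot-u^+)|^{2-p}$ at infinity --- the step where the precise range of $p$ enters. Finally, for the regularized coefficient $g_\alpha$ in \qref{def-g_alpha} the same scheme applies with the $\vep_k$-dependent primitive $\Phi_{\vep_k}(t)=\int_{u^-}^t \sqrt{2W(s)}/g_\alpha(s)\,ds$; since $\sqrt{2W}/g_\alpha\le \sqrt2$ uniformly in $k$ and $\Phi_{\vep_k}\to\Phi$ locally uniformly as $\vep_k\to0$, the $BV$ bound and the identification of the limit carry over, confirming that the regularization does not affect the compactness.
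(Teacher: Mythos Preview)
Your plan is essentially the paper's: introduce the primitive of $\sqrt{2W}/g$, use AM--GM (Young) to bound the total variation of its composition with $u_k$, extract a $BV$--compact subsequence, identify the two-valued limit from $\int W(u_k)/g(u_k)\to 0$, and upgrade a.e.\ convergence of $u_{k_j}$ to $L^1$ via equi-integrability.  The paper likewise treats $p=1$ separately (where $\Phi$ is affine and everything is immediate) and then the general $p$.

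The one point where your write-up is looser than the paper is the chain rule $\nabla(\Phi\circ u_k)=\Phi'(u_k)\nabla u_k$.  For $p\neq 1$ the derivative $\Phi'(s)=\sqrt2\,|(s-u^-)(s-u^+)|^{1-p}$ is \emph{not} globally bounded: it blows up at $u^\pm$ when $p>1$ and at $\pm\infty$ when $p<1$, so $\Phi$ is not Lipschitz and the standard Sobolev chain rule does not apply directly to $u_k\in H^1$.  The paper sidesteps this by working instead with the truncated primitive
\[
h_K(t)=\sqrt2\int_{u^-}^{t}\min\{\,|(s-u^-)(s-u^+)|^{1-p},\,K\,\}\,ds,
\]
which \emph{is} Lipschitz; the chain rule then gives a legitimate $W^{1,1}$ bound on $w_k:=h_K(u_k)$, compactness yields $w_{k_j}\to w$, and since $h_K$ is a homeomorphism one recovers a.e.\ convergence of $u_{k_j}$.  (The passage $K\to\infty$ is only needed later, in the liminf inequality.)  Your scheme becomes rigorous once you insert this truncation, and nothing else changes.

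For the $L^1$ upgrade you correctly flag Vitali.  The paper does exactly this: it proves equi-integrability of $\{u_k\}$ from the bound $\int_\Omega|(u_k-u^-)(u_k-u^+)|^{2-p}\,dx\le M\vep_k$ by splitting $\Omega$ into the set where $|u_k-u^\pm|\ge|u^\pm|$ (controlled by H\"older and the potential bound) and its complement (where $|u_k|$ is uniformly bounded), and then combines equi-integrability with Egorov.  Your growth comparison $|\Phi(t)|\le C(1+W(t)/g(t))$ is fine for bounding $\Phi\circ u_k$ in $L^1$, but note it does \emph{not} by itself give equi-integrability of $u_k$ (only of $\Phi\circ u_k$), so the explicit argument is still needed.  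For the regularized $g_\alpha$ the paper argues by splitting $\Omega$ according to whether $|(u_k-u^-)(u_k-u^+)|\gtrless\alpha\vep_k$, which is the concrete version of your ``$\Phi_{\vep_k}\to\Phi$'' remark.
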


\begin{proof} Suppose $\{u_k\}$ is a sequence in $L^1(\Omega)$  
such that $E^{\vep_k}_{\rm dGCH}(u_k)$ is uniformly bounded. 
Then there exists $M>0$ that is
independent of $k$ such that 
\begin{align}\label{bound1}
	E^{\vep_k}_{\rm dGCH}(u_k) := \int_\Omega \frac1{g(u_k)}\left( \frac{\vep_k}2|\nabla u_k|^2 + \frac1{\vep_k} W(u_k)\right)dx
	\leq M<\infty.
\end{align}
Hence 
\begin{align}\label{bound2}
	\disp \int_{\Omega} \frac{W(u_k)}{g(u_k)} \leq M\vep_k
\end{align} for all $k$, regardless of whatever form $g$ takes. The case $p=1$ is relatively straightforward, and has clean
structures for the singular form \qref{def_g01} and the regularized form \qref{def-g_alpha}. We will first analyze this case and then 
study the general case $0<p<3/2$. 

{\it Case 1. $p=1$ and $g$ is in the singular form of \qref{def_g01}.  } 

For $g_0$ in \qref{def_g01}, by the definition of $W$ in \qref{def-W}, Inequality \qref{bound2}
becomes
\begin{align} \label{bound2.1}
	\disp \int_{\Omega} |(u_k-u^-)(u_k-u^+)|\,dx = \int_\Omega \frac{W(u_k)}{g_0(u_k)}\,dx \leq  M\vep_k  \quad\mbox{for all } k.
\end{align}
Since $\Omega$ is bounded set, 
it is straightforward to conclude that $\disp\int_{\Omega} |u_k|^2\,dx\leq C<\infty$ 
for some constant $C>0$ independent of $k$. Hence  $\{u_k\}$ is bounded in $L^2(\Omega)$ and consequently bounded in 
$L^1(\Omega)$: there exists 
$C>0$ that depends on $\Omega$ but
independent of $k$ such that  
\begin{align}\label{bound3}
	\disp\int_\Omega |u_k|\,dx\leq C<\infty \mbox{ for all }k.
\end{align}

Now we estimate $\{\na u_k\}$ in $L^1(\Omega)$.  By the AM-GM inequality, \qref{bound1} implies that 
\begin{align}\label{bound3.1}
	E^{\vep_k}_{\rm dGCH}(u_k)
	&\geq \int_\Omega \frac{2}{g_0(u_k)} \left( \frac{\vep_k}2|\na u_k|^2 \cdot \frac1{\vep_k} W(u_k) \right)^{1/2} 
	= \sqrt{2}\int_\Omega  |\na u_k|\,dx.
\end{align}
Thus
\begin{align} \label{bound4}
	\int_\Omega |\nabla u_k|\,dx \leq M/\sqrt2.
\end{align}
Combining \qref{bound3}  and \qref{bound4}, 
we conclude that $u_k$ is bounded in $W^{1,1}(\Omega)$. Hence there exists a subsequence $u_{k_j}$ and a function $u\in BV(\Omega)$ such that 
\begin{align}
	&u_{k_j} \to u \quad\mbox{ strongly in } L^1(\Omega) \mbox{ and a.e. in }\Omega,  \\
	&\int_\Omega |\nabla u| \,dx \leq \liminf_{j\to\infty} \int_\Omega |\nabla u_{k_j}|\,dx. \label{bound4.2}
\end{align} 
Here $\int_\Omega |\nabla u|\,dx$ is the BV-seminorm of $u$. By  \qref{bound2.1} and Fatou's lemma, we have 
\begin{align}
	\int_\Omega |(u-u^-)(u-u^+)|\,dx \leq \liminf_{j\to\infty}\int_\Omega |(u_{k_j} - u^-)(u_{k_j} - u^+)| =0.
\end{align}
So $u$ only takes two values $u^\pm$. We can write 
\begin{align}
	u = u^- + (u^+-u^-)\chi_A= \left\{ 
	\begin{array}{ll}
		u^+ & \mbox{ if }x\in A, \\
		u^- &\mbox{ otherwise}
	\end{array}
\right.
\end{align}
where $A$ is a set of finite perimeter. 


{\it Case 2. $p=1$ and $g$ is in the regularized form of \qref{def-g_alpha}.  } 

For the regularized form $g_\alpha$ in \qref{def-g_alpha}, define 
\begin{align}\label{Omega-reg}
	\Omega^{r}_{k}:=\{x \in \Omega: [(u_{k}-u^-)(u_{k}-u^+)]^2\leq \alpha^2 \vep_k^2 \}
\end{align}
Then 
\begin{align}\label{bound5}
	\int_{\Omega_k^r} |u-u^-| |u-u^+|\,dx \leq |\Omega_k^r| \alpha\vep_k \leq \alpha |\Omega|\vep_k.
\end{align}	
In $\Omega\setminus \Omega^r_k$, we have 
\[ [(u_k-u^-)(|u_k-u^+)]^2 \geq \frac12[(u_k-u^-)(u_k-u^+)]^2 + \frac12\alpha^2\vep^2  = \frac12 g_\alpha^2 (u_k).\]
Hence
\begin{align}
	&\int_{\Omega\setminus\Omega_k^r} |u_k-u^-||u_k-u^+|\,dx 
	= \int_{\Omega\setminus\Omega_k^r} \frac{W(u_k)}{|u-u^-| |u-u^+|}\,dx \nn\\
	&\leq \int_{\Omega\setminus\Omega_k^r}\frac{\sqrt2 W(u_k)}{g_\alpha(u_k)}
	\leq \int_{\Omega}\frac{\sqrt2 W(u_k)}{g_\alpha(u_k)}
	\leq \sqrt2 M\vep_k. \label{bound6}
\end{align}
Combining \qref{bound5} and \qref{bound6} we have 
\begin{align} \label{bound6.1}
 	\int_\Omega |u_k-u^-| |u_k-u^+|\,dx \leq (\alpha|\Omega| + \sqrt2M)\vep_k \quad\mbox{ for all }k.
\end{align} 
Hence $\{u_k\}$ is bounded in $L^2(\Omega)$ and consequently bounded in $L^1(\Omega)$.

To control $\nabla u_k$, we notice that over $\Omega^{r}_k$, 
\begin{align*}
	M\geq E^{\vep_k}_{\rm dGCH}[u_{k}] &\ge \displaystyle\int_{\Omega^r_k} 
	\frac{\vep_k | \na u_{k} |^2}{2\sqrt{[(u_{k}-u^-)(u_{k}-u^+)]^2+\alpha^2 \vep_k^2}}
	\ge \int_{\Omega^r_k} \frac{|\nabla u_{k}|^2}{2\sqrt{2}\alpha}.
\end{align*}
Thus $\int_{\Omega_k^r} |\na u_{k}|^2\,dx \leq 2\sqrt2 \alpha M$ and 
\begin{align} \label{bound7}
	\int_{\Omega_k^r} |\na u_{k}|\,dx \leq  (2\sqrt2 \alpha M |\Omega_k^r|)^{1/2} \leq (2\sqrt2 \alpha M |\Omega|)^{1/2}.
\end{align}	
For  $\Omega\setminus \Omega^{r}_k$, we have
\begin{align}
	M&\geq E^{\vep_k}_{\rm dGCH}[u_{k}] \nn\\
	&\geq \displaystyle\int_{\Omega\setminus\Omega^r_k} \left\{\frac{\vep_k | \na u_{k} |^2}{2\sqrt{[(u_{k}-u^-)(u_{k}-u^+)]^2
	+\alpha^2 \vep_k^2}} \right.\nn\\
	&\quad+\left.\frac{[|(u_{k}-u^-)(u_{k}-u^+)|]^2}{\vep_k \sqrt{[(u_{k}-u^-)(u_{k}-u^+)]^2+\alpha^2 \vep_k^2}}\right\}\,dx \nn\\
	&\geq \int_{\Omega\setminus\Omega^{r}_k} \left\{\frac{\vep_k|\nabla u_{k}|^2}{2\sqrt{2[(u_{k}-u^-)(u_{k}-u^+)]^2}}
	+\frac{|(u_{k}-u^-)(u_{k}-u^+)|}{\vep_k \sqrt{2}}\right\}\,dx \nn\\
	&\geq \int_{\Omega\setminus\Omega_k^r} |\nabla u_k|\,dx \quad\mbox{by the AM-GM inequality.} \label{bound8}
\end{align}
Combining \qref{bound7} and \qref{bound8}, we see that $\{\na u_{k}\}$ is bounded in $L^1(\Omega)$. Combined with the $L^1(\Omega)$ bound of $u$, 
we conclude that $u$ is bounded in $W^{1,1}(\Omega)$. The rest is similar to Case 1 except that we will use \qref{bound6.1} 
in place of \qref{bound2.1}.

{\it Case 3. $(0<p\leq3/2)$ and $g$ is in the singular form \qref{def_g0p}.}

For the general case of $g$ defined by \qref{def_g0p}, inequality 
\qref{bound2} translates into 
\begin{align}\label{bound9}
 	\int_\Omega |u_k-u^+|^{2-p} |u_k-u^-|^{2-p} \,dx\leq M\vep_k \quad\mbox{ for all }k.
\end{align}
First, we show that $\{u_{k}\}$ are equi-integrable.  Let $\gamma > 0$ be fixed.  Let $k_0$ be large enough so that 
\begin{align}\label{bound10}
 	M \vep_k <\lambda^{4-2p} \quad\mbox{ for all } k\geq k_0.
\end{align}	
For any $k\geq k_0$ define 
\[ \Omega_k^1 := \left\{x\in\Omega: |u_k - u^-|\geq |u^-| \mbox{ and } |u_k-u^+|\geq |u^+| \right\}.\]
Then in $\Omega_k^1$ we have
\[ |u_k| \leq |u_k-u^-| + |u^-| \leq 2 |u_k-u^-|\; \mbox{ and }\;
 |u_k| \leq |u_k-u^+| + |u^+| \leq 2 |u_k-u^+|.\]
Consequently in $\Omega_k^1$
\[ |u_k| \leq 2|u_k-u^-|^{1/2} |u_k-u^+|^{1/2}\]
and hence for all $k\geq k_0$
\begin{align}
	\int_{\Omega_k^1} |u_k|\,dx &\leq 2 \int_{\Omega_k^1}|u_k-u^-|^{1/2} |u_k-u^+|^{1/2}\,dx \nn\\
		&\leq  2\int_{\Omega}|u_k-u^-|^{1/2} |u_k-u^+|^{1/2}\,dx \nn\\
		&\leq C \left(\int_{\Omega} |u_k-u^-|^{2-p} |u_k-u^+|^{2-p} \,dx \right)^{1/(4-2p)} \nn\\
		&\leq C\lambda \quad\mbox{ by }\qref{bound9} \mbox{ and }\qref{bound10}. \label{bound11}
\end{align}
Here $C$ depends on $\Omega$ and $p$ but not on $k$. Now 
\[ \Omega\setminus\Omega_k^1 = \{x\in\Omega:  |u_k-u^-|< |u^-|\mbox{ or }|u_k-u^+|< |u^+|\}.\] 
So in $\Omega\setminus\Omega_{k}^1$ we have either 
\[ |u_k|\leq |u_k -u^-| + |u^-| \leq 2 |u^-| \] or \[|u_k| \leq |u_k-u^+| + |u^+| \leq 2 |u^+|.\]
Hence $|u_k| \leq 2 (|u^-| + |u^+|) $ in $\Omega\setminus\Omega_k^1$. For any subset 
$G\subset \Omega\setminus\Omega_k^1$ with $|G| \leq \frac{\lambda}{2(|u^+|+|u^-|)}$, 
\begin{align}\label{bound12}
	\int_G |u|\,dx &\leq 2 (|u^-| + |u^+|) |G| \leq \lambda. 
\end{align}
By \qref{bound11} and \qref{bound12}, there exists constant $C$ depending only on $\Omega$ and $p$ and 
independent of $k$ such that for any subset $G\subset\Omega$
with $|G|\leq \frac{\lambda}{2(|u^+|+|u^-|)}$ we have for all $k\geq k_0$
\begin{align}
	\int_G |u_k| &\leq \int_{G\cap \Omega_k^1} |u_k|\,dx + \int_{G\setminus\Omega_k^1} |u_k|\, dx \leq C\lambda. 
\end{align}
For the first $k_0-1$ terms of $u_k$, 
by the absolute continuity of integrals, there exists $\delta >0$ such that for any subset $G\subset\Omega$, if
$|G|\leq \delta$, then 
\begin{align} 
	\int_G |u_k|\,dx \leq \lambda, \quad k=1,2,\dots, k_0-1. 
\end{align}	
This estimate together with \qref{bound12} shows that the whole sequence $u_k$ is equi-integrable. 

Next we show that there is a subsequence of $u_k$ that converges a.e. in $\Omega$.  
By the AM-GM Inequality, 
\begin{align}\label{bound12.1}
	M\geq E^{\vep_k}_{\rm dGCH}[u_{k}] &\ge \sqrt2\int_{\Omega} |u_{k}-u^-|^{1-p}|u_{k}-u^+|^{1-p}|\na u_{k}|.
\end{align}
 Let $K>0$ be some fixed real number. Define 
 \begin{align}\label{def-h}
 	h_K(t)=\sqrt2\int_{u^-}^t\min\{ |s-u^-|^{1-p}|s-u^+|^{1-p},K\}ds.
 \end{align}
 Then $h_K(t)$ is a strictly increasing and Lipschitz function. Define  
 \[ w_k(x):= h_K(u_k(x)).\]
 Then  $|w_k| \leq \sqrt2 K |u_k|$ and by the chain rule,  
\begin{align} |\nabla w_k| &= \sqrt 2 \min\{ |u_k-u^-|^{1-p}|u_k-u^+|^{1-p},K\}|\nabla u_k| \nn\\
	&\leq \sqrt2|u_k-u^-|^{1-p}|u_k-u^+|^{1-p} |\nabla u_k|. \label{bound12.2}
\end{align}
By \qref{bound12.1} 
 $w_k$ is bounded in $W^{1,1}(\Omega)$.  Hence there exists a 
 subsequence $w_{k_j}$ that converges strongly  in $L^1(\Omega)$ and a.e. in $\Omega$ to a BV function $w(x)$.  
 Note that $h_K$ is strictly increasing and continuous, so $h_K^{-1}$ exists and is continuous, and the corresponding 
 subsequence $u_{k_j} = h^{-1}(w_{k_j})$ converges a.e. in $\Omega$ to  $u:= h_K^{-1}(w)$. 
 By the Egorov theorem and the equi-integrability of $\{u_{k_j}\}$, we see that $u_{k_j}$ strongly converges 
 in $L^1(\Omega)$ to $u$.

Now we show that this limit function $u$ is in fact $BV(\Omega;\{u^-,u^+\})$.  By \qref{bound2}, 
\begin{align*}
	 \int_{\Omega} |u_{k_j}-u^-|^{2-p}|u_{k_j}-u^+|^{2-p}\, dx \leq M \vep_{k_j}.
\end{align*}
By Fatou's lemma, 
\begin{align*}
	 \int_{\Omega} |u-u^-|^{2-p} |u-u^+|^{2-p}\, dx \leq \liminf_{j \rightarrow \infty}  
	 \int_{\Omega} |u_{k_j}-u^-|^{2-p}|u_{k_j}-u^+|^{2-p} = 0.
\end{align*}
So $u$ only takes values of $u^-$ or $u^+$, and $w=h_K(u)$ takes only two values, 0, and $h_K(u^+)$. Since $w\in BV(\Omega)$, 
we can write $w= h_K(u^+)\chi_A$, where $A$ is a set of finite perimeter.  Consequently
$u =  u^- + (u^+-u^-)\chi_A.$
\end{proof}

\section{Liminf and Limsup Inequalities} \label{sec-liminf}
In this section we prove the following two lemmas, which establish the $\Gamma$-limit of $E_{\rm dGCH}^\vep$ as stated in Theorem \ref{th-main}.

\begin{lemma}\label{Liminf_Lemma}
{(Liminf Inequality)} For any function $u\in L^1(\Omega)$ and any sequence $\{u_k\}\subset L^1(\Omega)$  
such that $u_k \rightarrow u$ strongly in $L^1(\Omega)$, we have 
\begin{align}\label{liminf-ineq}
	E^0_{\rm dGCH}(u) \leq \liminf_{k\to\infty} E^{\vep_k}_{\rm dGCH}(u_k).
\end{align}
\end{lemma}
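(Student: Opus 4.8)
The plan is to reduce the liminf inequality to the standard Modica–Mortola argument by exploiting the pointwise algebraic identity that the de Gennes factor $\frac{1}{g}$ contributes to the integrand exactly the same geometric optics weight that produces $\sigma(p)$. First I would dispose of the trivial case: if $\liminf_k E^{\vep_k}_{\rm dGCH}(u_k) = +\infty$ there is nothing to prove, so I may pass to a subsequence (not relabeled) along which the liminf is achieved as a finite limit and $\sup_k E^{\vep_k}_{\rm dGCH}(u_k) < \infty$. Then Lemma~\ref{Compactness_Lemma} applies: the limit $u$ must be of the form $u = u^- + (u^+-u^-)\chi_A$ with $A$ of finite perimeter, and in particular $u\in BV(\Omega)$. (If $u$ is \emph{not} of this form, the compactness lemma forces a contradiction with $\sup_k E^{\vep_k}_{\rm dGCH}(u_k)<\infty$, so $E^0_{\rm dGCH}(u)=+\infty$ cannot occur against a finite liminf.) Thus it suffices to establish the inequality $\sigma(p)\,Per(A) \le \lim_k E^{\vep_k}_{\rm dGCH}(u_k)$.

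The key step is the same AM–GM lower bound already used in the compactness proof, now read as an exact distance estimate. For $g$ in the singular form \qref{def_g0p} I would write, by the AM--GM inequality applied inside the integral,
\begin{align}
	E^{\vep_k}_{\rm dGCH}(u_k)
	&\ge \sqrt2\int_\Omega \frac{1}{g(u_k)}\sqrt{|\nabla u_k|^2\,W(u_k)}\,dx
	= \sqrt2\int_\Omega |u_k-u^-|^{1-p}|u_k-u^+|^{1-p}\,|\nabla u_k|\,dx,
\end{align}
using $W(u)=|(u-u^-)(u-u^+)|^2$ and $g_0(u)=|(u-u^-)(u-u^+)|^p$. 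Define the one–dimensional potential $\Phi(t):=\sqrt2\int_{u^-}^t |s-u^-|^{1-p}|s-u^+|^{1-p}\,ds$, which is exactly the Lipschitz function whose total increment across the well is $\Phi(u^+)-\Phi(u^-)=\sigma(p)$. Then the right-hand side equals $\int_\Omega |\nabla(\Phi\circ u_k)|\,dx$, the $BV$-seminorm of $w_k:=\Phi\circ u_k$. Since $u_{k}\to u$ strongly in $L^1$ and $\Phi$ is continuous with the needed growth control (here I would invoke the same truncation $h_K$ and equi-integrability machinery from Case 3 to pass from $L^1$ convergence of $u_k$ to $L^1$ convergence of $w_k$), lower semicontinuity of total variation gives
\begin{align}
	\liminf_{k\to\infty} E^{\vep_k}_{\rm dGCH}(u_k)
	\ge \liminf_{k\to\infty}\int_\Omega |\nabla w_k|\,dx
	\ge \int_\Omega |\nabla w|\,dx,
\end{align}
where $w=\Phi\circ u$. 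Because $u$ takes only the values $u^\pm$, $w$ takes only the values $\Phi(u^-)$ and $\Phi(u^+)$, so its total variation is $(\Phi(u^+)-\Phi(u^-))\,Per(A)=\sigma(p)\,Per(A)=E^0_{\rm dGCH}(u)$, which is the claim. For the regularized form $g_\alpha$ in \qref{def-g_alpha} the same chain works: since $g_\alpha(u_k)\le \sqrt2\,|(u_k-u^-)(u_k-u^+)|$ on the region where $W$ is comparable, the AM--GM bound produces the identical integrand up to the $\alpha\vep_k$ correction, which vanishes in the limit, so the liminf is again at least $\sigma(1)\,Per(A)$.

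The main obstacle is the passage from $L^1$ convergence of $u_k$ to the lower semicontinuity statement for $w_k=\Phi\circ u_k$, precisely because $\Phi$ need not be globally Lipschitz when $p\ge 1$ (the density $|s-u^\pm|^{1-p}$ is unbounded near the wells) and because $u_k$ is only controlled in $L^1$, not in $L^\infty$. I expect to handle this exactly as in the compactness proof: truncate the integrand of $\Phi$ at level $K$ to obtain a Lipschitz $\Phi_K=h_K$, apply total-variation lower semicontinuity to $h_K\circ u_k$ to get $\liminf_k E^{\vep_k}_{\rm dGCH}(u_k)\ge (h_K(u^+)-h_K(u^-))\,Per(A)$, and then let $K\to\infty$ using monotone convergence so that $h_K(u^+)-h_K(u^-)\uparrow\sigma(p)$. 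This truncation also sidesteps the integrability worry, since $h_K$ is bounded-Lipschitz and the a.e.\ convergence from Lemma~\ref{Compactness_Lemma} transfers cleanly through it. A minor technical point I would verify is that the AM--GM step is applied pointwise before any truncation, so that the inequality $E^{\vep_k}_{\rm dGCH}(u_k)\ge \int_\Omega|\nabla(h_K\circ u_k)|\,dx$ holds for \emph{every} fixed $K$ simultaneously, which is what licenses the final $K\to\infty$ limit.
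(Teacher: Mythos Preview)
Your proposal is correct and follows essentially the same route as the paper: AM--GM inside the integral, composition with the (truncated) one-dimensional potential $h_K$, lower semicontinuity of the BV seminorm, and finally $K\to\infty$ by monotone convergence---this is exactly the paper's Case~3, and your Case~1 collapses to the paper's one-line argument when $p=1$. The only place the paper is more explicit is the regularized case $g_\alpha$: rather than splitting into the region where $g_\alpha\le\sqrt2\,|(u_k-u^-)(u_k-u^+)|$ and a remainder, the paper defines a $k$-dependent antiderivative $F_k(s)=\int_{u^-}^s\sqrt{2[(t-u^-)(t-u^+)]^2/([(t-u^-)(t-u^+)]^2+\alpha\vep_k^2)}\,dt$, proves $F_k(u_k)\to\sqrt2(u-u^-)$ in $L^1$ directly by dominated convergence, and then applies BV lower semicontinuity in one stroke---a cleaner execution of the same ``vanishing correction'' idea you sketched.
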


\begin{lemma}\label{Limsup_Lemma}
{(Limsup Inequality)} For any $u \in L^1(\Omega)$, 
there exists a sequence $\{u_k\} \subset L^1(\Omega)$ that converge to $u$ strongly in $L^1(\Omega)$ with 
\begin{align}\label{limsup-ineq}
 \limsup_{k\to\infty} E^{\vep_k}_{\rm dGCH}(u_k)\leq E^0_{\rm dGCH}(u).
\end{align} 
\end{lemma}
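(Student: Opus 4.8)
The plan is to construct an explicit recovery sequence by the optimal-profile method, adapted to the singular weight $1/g$. First I would dispose of the trivial case: if $u$ is not of the form $u = u^- + (u^+-u^-)\chi_A$ with $A$ of finite perimeter, then $E^0_{\rm dGCH}(u) = +\infty$ and \eqref{limsup-ineq} holds vacuously, so I assume $u = u^- + (u^+-u^-)\chi_A$. Next I would reduce to a smooth interface. Since the $\Gamma$-upper limit $u \mapsto \inf\{\limsup_k E^{\vep_k}_{\rm dGCH}(u_k): u_k \to u \mbox{ in } L^1\}$ is lower semicontinuous under $L^1$ convergence, and since any set of finite perimeter can be approximated by open sets $A_j$ with $C^2$ boundary so that $\chi_{A_j} \to \chi_A$ in $L^1(\Omega)$ and $Per(A_j) \to Per(A)$, a standard diagonal argument reduces the lemma to producing a recovery sequence when $\partial A \cap \Omega$ is of class $C^2$. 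Finally, because $g_\alpha \ge g_0$ pointwise (with $p=1$) yields $1/g_\alpha \le 1/g_0$, the energy computed with $g_\alpha$ is bounded above by the energy computed with $g_0$; hence it suffices to build the recovery sequence for the singular form $g=g_0$ in \eqref{def_g0p}, and the very same sequence serves the regularized form \eqref{def-g_alpha} by domination.

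For smooth $A$, let $d$ denote the signed distance to $\partial A$ (positive inside $A$), which is $C^2$ in a tubular neighborhood of $\partial A$, and let $\phi:\R \to (u^-,u^+)$ solve the equipartition ODE $\phi' = \sqrt{2W(\phi)}$ with $\phi(0) = (u^-+u^+)/2$, so that $\phi(\xi) \to u^\pm$ as $\xi \to \pm\infty$. Set $u_k(x) := \phi(d(x)/\vep_k)$, which is Lipschitz and hence lies in $H^1(\Omega)$, so it is admissible. Then $u_k \to u$ in $L^1(\Omega)$, and by the coarea formula together with the substitution $\xi = d/\vep_k$ the energy concentrates on $\partial A$ and converges to
\[ Per(A)\int_{\R}\frac{1}{g(\phi)}\Big(\tfrac12|\phi'|^2 + W(\phi)\Big)\,d\xi = Per(A)\int_{\R}\frac{|\phi'|\,\sqrt{2W(\phi)}}{g(\phi)}\,d\xi = Per(A)\int_{u^-}^{u^+}\frac{\sqrt{2W(s)}}{g(s)}\,ds = \sigma(p)\,Per(A), \]
where the last identity uses $\sqrt{2W(s)}/g_0(s) = \sqrt2\,|(s-u^-)(s-u^+)|^{1-p}$ and the definition of $\sigma(p)$ in Theorem \ref{th-main}. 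This gives $\limsup_k E^{\vep_k}_{\rm dGCH}(u_k) \le \sigma(p)\,Per(A) = E^0_{\rm dGCH}(u)$.

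The hard part, and what distinguishes this problem from the classical Modica--Mortola setting, is the singularity of $1/g$ at the pure phases $u^\pm$. Two points need genuine care. First, $\sigma(p)$ must be finite: since $\sqrt{2W}/g_0 \sim |s-u^\pm|^{1-p}$ near the endpoints, the integral converges precisely when $p<2$, which I would record as the admissible range (consistent with the compactness hypothesis $p\le 3/2$). Second, and more delicate, because the equipartition profile reaches $u^\pm$ only as $\xi\to\pm\infty$, the field $u_k$ never equals $u^\pm$ exactly, so I must control the energy far from $\partial A$, where $1/g(u_k)$ blows up. Here I would exploit that $\phi$ approaches $u^\pm$ exponentially, so $u^\pm - \phi(d/\vep_k) = O(e^{-c\,d/\vep_k})$; then on $\{|d| \ge d_0\}$ the integrand is bounded, up to constants, by $\vep_k^{-1} e^{-c(2-p)d_0/\vep_k}$, which tends to $0$ for $p<2$. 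This exponential decay is exactly what tames the singular weight.

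If one prefers a field that is locally equal to $u^\pm$ (so as to localize near $\partial A$ and avoid the global cut locus of $d$), I would instead freeze $u_k = u^\pm$ outside a fixed tube $\{|d|<\rho\}$ and interpolate across a thin shell of width $\vep_k$ from $\phi(\rho/\vep_k)$ to $u^\pm$. The matching-shell energy must then be estimated directly; because the jump $u^\pm - \phi(\rho/\vep_k)$ is exponentially small while $1/g$ grows only polynomially, the shell contribution again vanishes for $p<2$. Either realization yields a recovery sequence with $\limsup_k E^{\vep_k}_{\rm dGCH}(u_k) \le E^0_{\rm dGCH}(u)$, and the regularized energy follows for free by the domination noted above. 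The only remaining bookkeeping is the behavior where $\partial A$ meets $\partial\Omega$, which I would handle by the standard device of working with the relative perimeter ${\cal H}^{n-1}(\partial^* A \cap \Omega)$ and noting that, in the Neumann-type setting, the construction near $\partial\Omega$ contributes no surface energy in the limit.
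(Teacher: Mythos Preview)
Your proposal is correct and complete in spirit, but the construction differs from the paper's in one notable way. You employ the fixed equipartition profile $\phi' = \sqrt{2W(\phi)}$, which approaches $u^\pm$ only asymptotically, and then control the far-field contribution of $1/g$ via the exponential decay of $u^\pm - \phi(d/\vep_k)$. The paper instead uses an $\vep_k$-regularized profile: it sets $\Phi_k(t) = \int_{u^-}^{t} \vep_k/\sqrt{2W(s)+\vep_k}\,ds$, takes $\phi_k = \Phi_k^{-1}$ extended by the constant values $u^\pm$, and defines $u_k = \phi_k(d_A)$. Since $\Phi_k(u^+) < \sqrt{\vep_k}(u^+-u^-)$, this profile reaches $u^\pm$ \emph{exactly} at finite distance, so $u_k$ is identically $u^\pm$ outside a tube of width $O(\sqrt{\vep_k})$ and the integrand vanishes there (as $|\nabla u_k|^2/g$ and $W/g = |(u-u^-)(u-u^+)|^{2-p}$ both vanish at the pure phases). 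The paper then runs the coarea computation directly on the tube without any tail estimate. What your route buys is conceptual transparency: the optimal profile is independent of $g$, echoing the paper's own remark that the transition layer is determined solely by $W$, and your domination argument $1/g_\alpha \le 1/g_0$ for the regularized energy is cleaner than the paper's terse ``the same sequence will work.'' What the paper's route buys is that the singularity of $1/g$ never has to be confronted away from the interface, so no exponential-decay bookkeeping is needed; the $\vep_k$ inside the square root makes $\phi_k$ attain the pure phases in finite distance and thereby sidesteps precisely the ``hard part'' you identified.
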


%
\subsection{Proof of the liminf inequality} 

If $\disp\liminf_{k\to\infty} E^{\vep_k}_{\rm dGCH}(u_k) = \infty$, then \qref{liminf-ineq} is trivial. So
we only need to consider the case  
$\liminf_{k\to\infty} E^{\vep_k}_{\rm dGCH}(u_k)<\infty$. 
By extracting 
a subsequence if necessary we may assume without loss of generality that
\[\lim_{k\to\infty} E^{\vep_k}_{\rm dGCH}(u_k) =  \liminf_{k\to\infty} E^{\vep_k}_{\rm dGCH}(u_k).\]
By the compactness result in Section \ref{sec-compactness}, there 
exists a subsequence $u_{k_j}$ and a set $A$ of finite perimeter such that 
$u_{k_j}$ converges to $u^-+(u^+-u^-)\chi_A$ strongly in $L^1(\Omega)$ and almost everywhere in 
$\Omega$. By the uniqueness of limit we conclude that actually $u=u^-+(u^+-u^-)\chi_A$, and 
\[E^0_{\rm dGCH} (u) = \sigma(p) Per(A).\]

{\it Case 1. $p=1$ and $g$ is in the singular form of \qref{def_g01}.  } 

By the lower 
semicontinuity of the BV semi-norm and \qref{bound3.1}, we get 
\begin{align*}
	E^0_{\rm dGCH}(u)&=\sqrt2 (u^+-u^-) Per(A) =\sqrt2 |u|_{BV(\Omega)} 
	\leq \liminf_{j\to\infty}\sqrt2 \int_\Omega |\nabla u_{k_j}|\,dx \nn\\
	&\leq \liminf_{j\to\infty} E^{\vep_{k_j}}_{\rm dGCH}(u_{k_j}) 
	=\liminf_{k\to\infty} E^{\vep_k}_{\rm dGCH}(u_k).\\
\end{align*}

{\it Case 2: $p=1$ and $g$ is in the regularized form of \qref{def-g_alpha}.}

By the AM-GM inequality, 
\begin{align} \label{est1}
	E^{\vep_k}_{\rm dGCH}(u_k)\geq \int_{\Omega} \sqrt{\frac{2[(u_{k}-u^-)(u_{k}-u^+)]^2}{[(u_{k}-u^-)(u_{k}-u^+)]^2
	+\alpha \vep_k^2}} |\nabla u_{k}| 
\end{align}
Similar to \qref{def-h} but without the cutoff, define 
\begin{align*}
	F_k(s)= \int_{u^-}^{s} \sqrt{\frac{2[(t-u^-)(t-u^+)]^2}{[(t-u^-)(t-u^+)]^2+\alpha \vep_k^2}} \,dt.
\end{align*}
Then $F_k$ is a Lipschitz function with a Lipschitz constant $\sqrt2$. 
Define 
\begin{align}
	w_k := F_k(u_k). \nn
\end{align}
Then $w_k \in W^{1,1}(\Omega)$ and 
\begin{align} \label{grad-w}
	|\na w_{k}|= \sqrt{\frac{2[(u_{k}-u^-)(u_{k}-u^+)]^2}{[(u_{k}-u^-)(u_{k}-u^+)]^2+\alpha \vep_k^2}} |\nabla u_{k}|
\end{align}
We will show that $w_k \to \sqrt2 (u-u^-) = \sqrt2 (u^+-u^-)\chi_A$ strongly in $L^1(\Omega)$. Indeed, 
\begin{align}
	&w_k - \sqrt 2 (u-u^-) = \int_{u^-}^{u_k} \sqrt{\frac{2[(t-u^-)(t-u^+)]^2}{[(t-u^-)(t-u^+)]^2+\alpha \vep_k^2}} \,dt 
	- \int_{u^-}^{u} \sqrt2 \,dt \nn\\
	&= \int_{u}^{u_k} \sqrt{\frac{2[(t-u^-)(t-u^+)]^2}{[(t-u^-)(t-u^+)]^2+\alpha \vep_k^2}} \,dt \nn\\
	&+ \int_{u^-}^{u} \left(\sqrt{\frac{2[(t-u^-)(t-u^+)]^2}{[(t-u^-)(t-u^+)]^2+\alpha \vep_k^2}} -\sqrt2\right)\,dt. \nn
\end{align}
Since $u=u^- $ in $\Omega\setminus A$ and $u=u^+$ in $A$, we have 
\begin{align}
	&\int_\Omega \left|w_k - \sqrt 2 (u-u^-)\right|\,dx \nn\\
	&\leq \sqrt2 \int_\Omega |u_k - u| \,dx
	+  |A| \int_{u^-}^{u^+} 
	\left|\sqrt{\frac{2[(t-u^-)(t-u^+)]^2}{[(t-u^-)(t-u^+)]^2+\alpha \vep_k^2}} -\sqrt2\right|\,dt \nn\\
	&\to 0 \qquad\mbox{as }k\to \infty. \nn
\end{align}
Here for the convergence of the second integral above, we use the Dominated Convergence Theorem. 
By the lower semicontinuity 
of the BV norm and \qref{est1}, \qref{grad-w}, we obtain the liminf inequality
\begin{align}
	\sqrt2 (u^+-u^-) Per(A) \leq \liminf_{k\to\infty} \int_\Omega |\nabla w_k|\,dx 
	\leq \liminf_{k\to\infty} E^{\vep_k}_{\rm dGCH}(u_k).
\end{align}

{\it Case 3. $0<p\leq 3/2$ and $g$ is in the singular form \qref{def_g0p}.}

Fix any $K>0$.  Let $h_K(t)$ be defined by \qref{def-h} and  
\[ w:=h_K(u), \quad \quad w_{k} := h_K(u_k)\quad \mbox{ for all }k,.\]
Since the Lipschitz constant of $h_K$ is $\sqrt2K$, 
\begin{align*}
	\int_\Omega |w_k -w| \,dx\leq \sqrt2K \int_\Omega |u_k - u|\,dx \to 0 \quad\mbox{as }k\to \infty.
\end{align*}
In addition, since $w=h_K(u^+)\chi_A$, by the lower semicontinuity of the BV norm, and \qref{bound12.1}, \qref{bound12.2}, 
we have 
\begin{align}\nn
	h_K(u^+) Per(A) &\leq \liminf_{k\to\infty} \int_\Omega |\nabla w_{k}|\,dx \nn\\
	&\leq \liminf_{j\to\infty} E^{\vep_{k_j}}_{\rm dGCH}(u_{k_j}) = \liminf_{k\to\infty} E^{\vep_k}_{\rm dGCH} (u_k). \nn
\end{align}
That is, 
\begin{align}\nn
	\sqrt2\left(\int_{u^-}^{u^+} \min\{|(s-u^-)(s-u^+)|^{1-p}, K\}\,ds\right) Per(A) \leq \liminf_{k\to\infty} E^{\vep_k}_{\rm dGCH} (u_k).
\end{align}
Letting $K\to\infty$, we obtain 
\begin{align}\nn
	\sqrt2 \left(\int_{u^-}^{u^+} |(s-u^-)(s-u^+)|^{1-p}\,ds\right) Per(A) \leq \liminf_{k\to\infty} E^{\vep_k}_{\rm dGCH} (u_k).
\end{align}
This is the liminf inequality \qref{liminf-ineq} for $0<p\leq 3/2.$

\subsection{The recovery sequence and the limsup inequality} \label{sec:recovery}

For any set $A\subset\Omega$ of finite perimeter, we have a corresponding $u:= u^-+(u^+-u^-)\chi_A \in BV(\Omega)$. 
We need to find a corresponding recovery sequence $u_k$ in the sense that $u_k\to u$ strongly in $L^1(\Omega)$ and 
\begin{align}\label{recovery}
	E^0_{\rm dGCH}(u) \ge \limsup_{k\to\infty} E^{\vep_k}_{\rm dGCH}(u_k). 
\end{align}
Note that if we can establish such sequences for $p=1$ and $0<p<1$, then the regularized dGCH will work under the same recovery sequence.  We construct the sequence for $0<p<1$.  

Suppose $u \in L^1(\Omega)$ with $E_{\rm dGCH}^0(u)$ finite.  Then $u=u^-+(u^+-u^-)\chi_A$ for some set $A$ with finite perimeter. 
We wish to construct some $u_{k}$ that converges to $u$ in the $L^1(\Omega)$ sense and the limsup condition holds.  There are multiple ways 
to construct sequences of recovery functions, see the dissertation of the second author \cite{renzi:thesis}. Here we will show that the 
standard construction for Cahn-Hilliard energy functionals also works for the dGCH functional.

Define $\Phi_k:[u^-,u^+] \rightarrow [0, \Phi_k(u^+)]$ as follows:
\begin{align*}
	\Phi_k(t)=\int_{u^-}^t\frac{\vep_k}{\sqrt{2W(s)+\vep_k}}\;ds.
\end{align*}
$\Phi_k$ is strictly increasing and continuous and therefore invertible.  Moreover, $0<\Phi_k(u^+)<\sqrt{\vep_k}(u^+-u^-)<\infty$.  Also, $\Phi_k(u^+) \rightarrow 0$ as $k\rightarrow \infty$.  So, we can define $\phi_k=\Phi_k^{-1}.$  We extend $\phi_k$ to the real line by setting $\phi_k(t)=u^-$ for $t \leq 0$ and $\phi_k(t)=u^+$ for $t>\Phi_k(u^+)$.
We can find the derivative by using the inverse rule:
\begin{align*}
	\frac{d\phi_k}{dt}&=\frac{\sqrt{2(W(\phi_k(t))+\vep_k)}}{\vep_k}.
\end{align*}
Now, let $d_A(x)$ be the following function
\begin{align*}
	d_A(x)=\left\{ \begin{array}{cc} 
		\text{dist}(x,\partial A) & \hspace{5mm} x \in A \\
		-\text{dist}(x,\partial A) & \hspace{5mm} x \in \Omega\setminus  A
	\end{array} \right.
\end{align*}
and define
\begin{align*}
	u_{k}=\phi_{k}(d_A(x)).
\end{align*}
Then $u_k \rightarrow \chi_A$ both pointwise in $\Omega$ and strongly in $L^1(\Omega)$.  
If $\partial A$ is $C^2$, then
\begin{align*}
	|\na u_k|=|\phi_k'(d_A(x) \na d_A(x)|&=\frac{\sqrt{2(W(\phi_k(d_A(x)))+\vep_k)}}{\vep_k}|\na d_A(x)|\\
	&=\frac{\sqrt{2(W(\phi_k(d_A(x)))+\vep_k)}}{\vep_k}
\end{align*}
Furthermore,
\begin{align*}
	&\displaystyle\lim_{k\rightarrow\infty} \sup_{u^- < s < u^+} \mathcal{H}^{n-1}(\{x\in\Omega:u_k(x)=s\}) \\
	&= \lim_{k\rightarrow\infty} \sup_{0 < s < \Phi_k(u^+)} \mathcal{H}^{n-1}(\{x\in\Omega:d_A(x)=\Phi_k(s)\}) \\
	&=P_{\Omega}(A)
\end{align*}
So, using the coarea formula, we get:
\begin{align*}
	&\lim_{k\rightarrow\infty} E^{\vep_k}_{\text{dGCH}}(u_k) \\
	&= \displaystyle\int_{\Omega} \left(\frac{\vep_k | \na u_{k} |^2}{2|u_{k}-u^-|^{p}|u_{k}-u^+|^{p}}+\frac{|u_{k}-u^-|^{2-p}|u_{k}-u^+|^{2-p}}{\vep_k}\right)dx \\
	&=\lim_{k\rightarrow\infty} \displaystyle\int_{\Omega} \left(\frac{\vep_k | \na u_{k} |}{2}+\frac{|u_{k}-u^-|^{2}|u_{k}-u^+|^{2}}{\vep_k|\na u_k|}\right)\frac{|\na u_k|}{|u_{k}-u^-|^{p}|u_{k}-u^+|^{p}}\;dx 	\\
	&=\lim_{k\rightarrow\infty} \displaystyle\int_{\Omega} \left(\frac{\sqrt{W(u_k)+\vep_k}}{\sqrt{2}}+\frac{W(u_k)}{\sqrt{2(W(u_k)+\vep_k)}}\right)\frac{|\na u_k|}{|u_{k}-u^-|^{p}|u_{k}-u^+|^{p}} dx	\\
	&=\lim_{k\rightarrow\infty} \displaystyle\int_{\Omega} \left(\frac{2W(u_k)+\vep_k}{\sqrt{2(W(u_k)+\vep_k)}}\right)\frac{|\na u_k|}{|u_{k}-u^-|^{p}|u_{k}-u^+|^{p}} dx	\\
	&=\lim_{k\rightarrow\infty} \displaystyle\int_{\Omega} \Big(\frac{\sqrt{2W(u_k)}}{|u_{k}-u^-|^{p}|u_{k}-u^+|^{p}}\Big)|\na u_k| \,dx \\
	&=\lim_{k\rightarrow\infty} \displaystyle\int_{-\infty}^\infty \Big(\frac{\sqrt{2W(r)}}{|r-u^-|^{p}|r-u^+|^{p}}\Big) \mathcal{H}^{n-1}(\{x\in \Omega : u_k(x)=r \})dr\\
	&=\lim_{k\rightarrow\infty} \displaystyle\int_{u^-}^{u^+} \sqrt{2} |r-u^-|^{1-p}|r-u^+|^{1-p} \mathcal{H}^{n-1}(\{x\in \Omega : u_k(x)=r \})dr\\
	&\leq \sqrt{2} P_{\Omega}(A) \displaystyle\int_{u^-}^{u^+} |r-u^-|^{1-p}|r-u^+|^{1-p} dr.
\end{align*}
This is exactly what we want.

We still need to consider the case where $A$ doesn't have $C^2$ boundary.  Here we cite Lemma 1.15 from 
\cite{leoni:note}.  We can find a sequence of open $A_j$ with $\partial A_j$ a nonempty, compact hypersurface of class 
$C^2$ and $\mathcal{H}^{n-1}(\partial A_j \cap \partial \Omega)=0$ such that $\chi_{A_j} \rightarrow \chi_A$ in 
$L^1(\Omega)$, $\text{Per}(A_j,\Omega) \rightarrow \text{Per}(A, \Omega),$ and $L^n(A_j)=L^n(A)$ for all $j$.
We can find a sequence $\{u_{k,j}\} \subset H^1(\Omega)$ such that $u_{k,j}\rightarrow \chi_{B_j}$ with the limsup 
condition satisfied.  Now, 
by a diagonalization argument, we can find a sequence of $u_{k,j}$ that still satisfies the limsup condition for $u$ 
and converge to $u$.
A similar argument works for $p=1$.  And if the argument holds for $p=1$, the same sequence will work for the 
regularized version of the dGCH equation.

\section{Discussion} In this paper we establish the sharp-interface limit of the dGCH energy in 
the framework of $\Gamma$--convergence. This is the first step to understand the limiting behavior as $\vep\to 0$ of the dGCH energy. 
There are many open questions. One is the properties of the minimizers of the dGCH energy under various boundary conditions. 
For instance, under the strong anchoring conditions, it is not surprising to expect the minimizers to have similar properties as those 
shown in \cite{Dai-Li-Luong:minimizers} for the ordinary Cahn-Hilliard energy. However, due to the de Gennes coefficient, the analysis would be much 
more complicated. Our ultimate goal is to analyze the limiting behavior as $\vep\to 0$ of the degenerate de Gennes-Cahn-Hilliard equation. One 
step before that would be to study the force convergence as is done in \cite{dll:conv} for a phase field model for molecule solvations. It will also be of 
interest to study a corresponding $L^2$ gradient flow and its degenerate version, which can be called the de Gennes-Allen-Cahn equations. We will leave these topics for future studies.

\bibliographystyle{plain}


\end{document}